\documentclass{article}
\usepackage{graphicx} 

\usepackage{authblk}
\usepackage{tikz}
\usepackage{epsfig}
\usepackage{caption}
\usepackage{amsfonts}
\usepackage{amssymb}
\usepackage{mathrsfs}
\usepackage{amsmath}
\usepackage{amsthm}
\usepackage{enumerate}
\usepackage{graphics}
\usepackage{pict2e}
\usepackage{cite}
\usepackage{subfigure}
\usepackage[subnum]{cases}
\usepackage{multirow}
\usepackage{comment}
\usepackage{xcolor}
\usepackage{cleveref}
\usepackage{comment}

\usepackage{color}

\setlength{\textheight}{8.5in} \setlength{\textwidth}{6.2in}
\setlength{\oddsidemargin}{0in} \setlength{\parindent}{1em}

\makeatletter



\newcommand{\KS}{\mathit{KS}} 
\newcommand{\SSG}{\mathit{SS}} 
\newcommand{\HKS}{\widehat{\KS}}
\newcommand{\HSS}{\widehat{\SSG}}





\newtheorem{theorem}{Theorem}[section]

\newtheorem{definition}[theorem]{Definition}

\newtheorem{lemma}[theorem]{Lemma}

\makeatletter
\newcommand\figcaption{\def\@captype{figure}\caption}
\newcommand\tabcaption{\def\@captype{table}\caption}
\makeatother

\newtheorem{conjecture}[theorem]{Conjecture}

\title{An Erd\H{o}s-Gallai conjecture for signed graphs}
\author{Lujia Wang \thanks{ljwang@zjnu.edu.cn}}
\date{}
\affil{Zhejiang Normal University}
    
\begin{document}

\maketitle
	
\begin{abstract}
For every natural number $p$, we show that the maximum negative girth among the class of signed graphs on $n$ vertices with balanced chromatic number at least $p$ is between $(1/e)n^{1/(p-1)}$ and $2(p-1) n^{1/(p-1)}$. This extends a conjecture of Erd\H{o}s and Gallai to signed graphs.

\end{abstract}

\textbf{Keywords:} balanced coloring, graph coloring, negative cycle, signed graph.

\textbf{MSC codes:} 05C15, 05C22, 05C38.

\section{Introduction}
Let $\lambda(n,p)$ be the minimum integer $\ell$ such that every $n$-vertex graph $G$ with chromatic number at least $p$ contains an odd cycle of length at most $\ell$. In other words, $\lambda(n,p)$ is the maximum odd girth of the class of graphs on $n$ vertices that has the chromatic number at least $p$.

Erd\H{o}s and Gallai\cite{E79} conjectured that for fixed $p$, $\lambda(n,p)$ grows asymptotically at a rate of $\Theta(n^{1/(p-2)})$. The conjecture is verified in two directions by Lov\'{a}sz\cite{L77} (independently by Schrijver\cite{S78}) and Kierstead, Szemer\'{e}di, and Trotter\cite{KST84}, who showed a lower bound and an upper bound, respectively.

\begin{theorem}\label{thm:EGconjecture}
    For every fixed $p$, 
    $$\lambda(n,p)=\Theta(n^{1/(p-2)}).$$
\end{theorem}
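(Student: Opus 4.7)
The plan is to prove the two bounds separately, following the approaches in \cite{L77,S78} and \cite{KST84}.

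For the lower bound $\lambda(n,p) \geq c \cdot n^{1/(p-2)}$, I would construct explicit graphs via the generalized Mycielski construction. Starting from the odd cycle $C_{2\ell+1}$, which has chromatic number $3$ and odd girth $2\ell+1$, the generalized Mycielskian $M_\ell(G)$ stacks $\ell-1$ copies of $V(G)$ with an apex on top, multiplies the vertex count by roughly $\ell$, preserves odd girth at least $2\ell+1$, and increases the chromatic number by one. Iterating $p-3$ times yields a graph with chromatic number at least $p$, odd girth at least $2\ell+1$, and $n = O(\ell^{p-2})$ vertices, so $\ell = \Omega(n^{1/(p-2)})$. The crucial ingredient is the chromatic lower bound $\chi(M_\ell(G)) \geq \chi(G) + 1$, which is proved topologically: one shows that the $\mathbb{Z}_2$-index of the box complex of $M_\ell(G)$ exceeds that of $G$ by one, and invokes Lov\'asz's bound $\chi(G) \geq \mathrm{ind}(B(G)) + 2$, which rests on Borsuk--Ulam. (Schrijver's alternative construction gives a vertex-critical version of the Kneser-type graphs, useful for sharpness.)

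For the upper bound $\lambda(n,p) \leq C \cdot n^{1/(p-2)}$, suppose $G$ has $n$ vertices, chromatic number at least $p$, and odd girth exceeding $\ell$. I would first pass to a $(p-1)$-critical subgraph $H$ with minimum degree at least $p-1$. Fix a vertex $v \in V(H)$ and examine BFS layers $V_0,V_1,\dots,V_k$ around $v$ for $k$ slightly less than $\ell/2$. Since the odd girth exceeds $\ell$, the graph restricted to $V_0 \cup \cdots \cup V_k$ is bipartite, so the coloring information is forced to come from deeper layers. A counting argument controls how fast the layers can grow: if every layer has size much more than $|V(H)|/\ell$, the layers exhaust the vertex set before reaching depth $k$, forcing bipartiteness and contradicting $\chi(H) \geq p$. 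Otherwise some layer $V_i$ has size at most $|V(H)|/\ell$, and one argues inductively on $p$ that $V_i$ or its boundary already supports a subgraph with chromatic number at least $p-1$ and the same odd-girth lower bound. Iterating the step $p-2$ times reduces the chromatic hypothesis to $\chi \geq 2$, i.e., the existence of an edge, which requires the surviving vertex set to be nonempty; this forces $\ell^{p-2} = O(n)$.

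The main obstacle is the chromatic lower bound in the Mycielski iteration: a purely combinatorial proof of $\chi(M_\ell(G)) \geq \chi(G) + 1$ is not known in general, and one genuinely needs topological machinery (Borsuk--Ulam, or Stiebitz's ``fence'' arguments). The upper bound is more elementary in spirit but requires the right invariant to be tracked through the induction, namely a subgraph with controlled minimum degree on a substantially smaller vertex set where both $\chi$ drops by exactly one and the odd-girth hypothesis is inherited; setting up this invariant cleanly is the technical heart of \cite{KST84}.
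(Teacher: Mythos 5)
First, note that the paper does not prove \Cref{thm:EGconjecture} at all: it is quoted as known background, with the lower bound attributed to Lov\'asz and Schrijver and the upper bound to Kierstead, Szemer\'edi and Trotter. So the comparison can only be with the cited sources and with the paper's signed analogues. Your lower bound sketch takes a genuinely different route from Lov\'asz--Schrijver: you iterate generalized Mycielskians over an odd cycle rather than using Kneser/Schrijver graphs (the paper's own signed construction is the Schrijver-graph route). The arithmetic ($n\approx \ell^{p-2}$ after $p-3$ iterations, odd girth preserved at $2\ell+1$) is right, and both routes ultimately rest on Borsuk--Ulam. One caveat: $\chi(M_r(G))\geq\chi(G)+1$ is \emph{false} for general $G$ when $r\geq 2$ (Tardif constructed graphs whose generalized Mycielskian does not raise the chromatic number); what you need, and what Stiebitz proved, is that it holds along the specific iterated family starting from $K_2$ or an odd cycle, because suitable topological connectivity is preserved at each step. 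Your phrasing (``a purely combinatorial proof \ldots is not known in general'') suggests the inequality itself is general, which it is not; you must restrict the claim to the iterated family.

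The upper bound sketch has a genuine gap at exactly the step you flag as ``the technical heart.'' Your argument finds a layer $V_i$ with $|V_i|\leq 2n/\ell$ and asserts that it ``supports a subgraph with chromatic number at least $p-1$.'' The elementary layer decomposition does not give this: since there are no edges between $V_{<i}$ and $V_{>i}$, one only gets $\chi(G)\leq \chi(G[V_i])+\max\bigl(\chi(G[V_{<i}]),\chi(G[V_{>i}])\bigr)$, and the large-odd-girth hypothesis controls $\chi(G[V_{<i}])\leq 2$ (a ball of radius $<\ell/2$ is bipartite) but says nothing about $V_{>i}$; symmetrizing over layers costs a factor of $2$ per step rather than an additive $1$. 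Run to completion, this yields only $\ell=O(n^{2/(p-2)})$ -- precisely the weaker exponent the paper itself points out (Section 5.3) as the output of the ``balls are bipartite'' argument. Getting the drop of exactly one color per distance-layer is the actual content of \cite{KST84}: one fixes a maximum-size subgraph $H$ of bounded chromatic number and grows a large small-radius obstruction through the distance-layers of $H$, using maximality of $H$ to force every layer to be large (this is the mechanism of the paper's \Cref{lem:signedKST}, where the signed setting makes the per-level cost a single balanced class). Without supplying that mechanism, your induction does not close, and the bound you would obtain is $O(n^{2/(p-2)})$, not $O(n^{1/(p-2)})$.
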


A \emph{signed graph} $(G,\sigma)$ is a graph $G$ with a \emph{signature function} $\sigma:E(G)\to\{-,+\}$. When the signature function is not specified, a signed graph is often denoted by $\widehat{G}$. \emph{Switching} a vertex $v\in V(G)$ is the operation to flip all signs on the edges of $\widehat{G}$ incident to $v$. A subset $B\subseteq V(G)$ of vertices is \emph{balanced} if the signature on $\widehat{G}[B]$ can be made all positive by a series of switchings; otherwise it is \emph{unbalanced}.

A \emph{balanced $p$-coloring} of $\widehat{G}$ is a function $f:V(G)\to[p]$ such that $f^{-1}(i)$ is a balanced set in $\widehat{G}$ for all $i\in [p]$. The \emph{balanced chromatic number} $\chi_b(\widehat{G})$ is the minimum integer $p$ such that $\widehat{G}$ admits a balanced $p$-coloring. The balanced chromatic number of a signed graph is invariant under switchings.

A closed walk in $\widehat{G}$ is positive (negative, respectively) if the product of its edges is positive (negative, respectively). Harary\cite{H53} showed that a set of vertices is balanced in a signed graph if and only if it induces no negative cycles. Hence, for signed graphs, the unbalanced (equivalently, negative) cycles play the role of coloring obstacles for balanced coloring, resembling the case of odd cycles for ordinary coloring. The \emph{negative girth}, $g_-(\widehat{G})$, of a signed graph $\widehat{G}$ is the length of a shortest negative cycle in $\widehat{G}$.

Let $\lambda_s(n,p)$ be the maximum negative girth among the class of signed graphs on $n$ vertices with the balanced chromatic number at least $p$. We show a result that is parallel to the Erd\H{o}s-Gallai conjecture for signed graphs.

\begin{theorem}\label{thm:signedErdosGallai}
    For every fixed $p$, $\lambda_s(n,p)=\Theta(n^{1/(p-1)}).$ In particular,
    $$(1/e)n^{1/(p-1)} \leq \lambda_s(n,p)\leq 2(p-1) n^{1/(p-1)}.$$
\end{theorem}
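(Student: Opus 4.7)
The plan is to prove the two bounds separately.

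For the lower bound $(1/e)n^{1/(p-1)}\le\lambda_s(n,p)$, I would use an Erd\H{o}s-style probabilistic construction adapted to signed graphs. Take the random signed graph whose underlying graph is $G(n,q)$ for a carefully chosen $q$, with each edge signed independently and uniformly $\pm$. The goal is to show that with positive probability (i) the number of negative cycles of length less than $(1/e)n^{1/(p-1)}$ is sublinear in $n$, and (ii) no subset of size $n/(p-1)$ induces a balanced subgraph. Deleting one vertex from each short negative cycle then yields a signed graph on $\Omega(n)$ vertices with negative girth at least $(1/e)n^{1/(p-1)}$ and $\chi_b\ge p$. The constant $1/e$ would arise from optimising the trade-off between the two conditions.

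For the upper bound, the key tool is the following: \emph{if $r<g_-(\widehat G)/2$, then for every $v\in V(\widehat G)$ the ball $B_r(v)$ is a balanced set}. To prove it, root a BFS tree $T$ at $v$ and switch each $u\in B_r(v)$ by the sign of the tree path from $v$ to $u$; every tree edge then becomes positive, and any non-tree edge $uw$ in $B_r(v)$ receives the sign of its fundamental cycle in $T\cup\{uw\}$, which has length at most $2r+1<g_-(\widehat G)$ and must therefore be positive. Setting $r=n^{1/(p-1)}$ so that $g_-(\widehat G)>2(p-1)r$, I would build a balanced $(p-1)$-colouring from BFS annuli. Fix $v_0\in V$ and form $A_j=\{u\mid(j-1)r<d(v_0,u)\le jr\}$; each $A_j\subseteq B_{jr}(v_0)$, which is balanced by the lemma for every $j\le p-1$. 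If the eccentricity of $v_0$ is at most $(p-1)r$, then $\{v_0\}\cup A_1\cup\dots\cup A_{p-1}=V$ and the $(p-1)$-colouring is immediate. Otherwise, I would apply the theorem inductively to the outer region $U=V\setminus B_{(p-1)r}(v_0)$, which has strictly fewer vertices and inherits the negative-girth hypothesis, and then glue the two colourings along the boundary layer.

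The main obstacle will be the gluing step. Only $A_{p-1}$ has edges into $U$, and these all go to the boundary layer $L_{(p-1)r+1}\subseteq U$, so the combined colouring will be balanced provided the colour used on $A_{p-1}$ is absent on $L_{(p-1)r+1}$ in the outer colouring. The technical question is whether the outer colouring can be permuted to arrange this---equivalently, whether $L_{(p-1)r+1}$ uses strictly fewer than $p-1$ colours. I would attempt to force this by starting the outer BFS at a vertex on $L_{(p-1)r+1}$ itself, so that the boundary layer sits inside the innermost outer annulus and hence (hopefully) carries only one colour. Making this precise, while verifying that any would-be cross-boundary monochromatic negative cycle must cross the boundary twice and therefore have length at least $2(p-1)r>g_-(\widehat G)$---a contradiction---is where I expect the bulk of the technical work.
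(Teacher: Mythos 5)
Both halves of your plan diverge from the paper's, and each has a genuine gap.

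For the lower bound, the probabilistic construction cannot work for $p\geq 3$. The expected number of cycles of length at most $L$ in $G(n,q)$ is roughly $\sum_{\ell\leq L}(nq)^{\ell}/(2\ell)$; for this to be $o(n)$ with $L$ polynomial in $n$ (as $L=(1/e)n^{1/(p-1)}$ is), you are forced into $nq\leq 1+o(1)$, i.e.\ the subcritical/critical regime, where a.a.s.\ every component is a tree or unicyclic and hence the balanced chromatic number is at most $2$ no matter how the edges are signed. So conditions (i) and (ii) of your plan are incompatible: any random signed graph sparse enough to have only $o(n)$ negative cycles of polynomial length already fails to have $\chi_b\geq 3$. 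This is the same reason the lower bound in the original Erd\H{o}s--Gallai problem required Lov\'asz's and Schrijver's Kneser-graph constructions rather than the Erd\H{o}s random construction (which only yields logarithmic girth). The paper accordingly uses the reduced Schrijver signed graph $\HSS(p+k-1,k)$, whose balanced chromatic number is exactly $p$ and whose negative girth is at least $1+\lceil k/(p-1)\rceil$; the constant $1/e$ comes from the estimate $\binom{p+k}{p-1}<(e(p+k)/(p-1))^{p-1}$, not from a probabilistic trade-off.

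For the upper bound, your key lemma (balls of radius $r$ with $2r+1<g_-$ are balanced, via switching along a BFS tree) is correct up to an off-by-one (as stated, $r<g_-/2$ admits the negative $(2r+1)$-cycle itself as a counterexample), and the case where $v_0$ has eccentricity at most $(p-1)r$ goes through. But the gluing step is exactly where the argument breaks, and it cannot be repaired along the lines you suggest. The boundary layer $L_{(p-1)r+1}$ can have diameter up to $2((p-1)r+1)$ inside $U$, so rooting the outer BFS at one of its vertices does not confine it to the innermost outer annulus; it may meet all $p-1$ color classes of the inductive coloring, and then no permutation of colors makes $A_{p-1}$ compatible with it. Your claim that a cross-boundary monochromatic negative cycle must have length at least $2(p-1)r$ is also unjustified: such a cycle only needs to pass between two adjacent distance layers and can stay entirely near the boundary. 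The paper avoids this trap by following Kierstead--Szemer\'edi--Trotter: rather than layering from a single root, it takes a maximum balanced $p$-colorable subgraph $H$, grows an ``obstruction'' $P_0$ in $W\setminus H$ by induction, and shows each distance layer $P_j$ of $H$ around $P_0$ must be large (else swapping $P_j$ for $P_0$ would enlarge $H$), so that after $n^{1/q}$ layers the obstruction has $n^{\ell/q}$ vertices while its radius grows only additively. That global extremal choice of $H$ is the idea your proposal is missing.
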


\section{The lower bound and Kneser signed graphs}
Two vertices $x,y$ in a signed graph $(G,\sigma)$ are \emph{antitwins} if $N(x)=N(y):=Z$ and $\sigma(xv)=-\sigma(yv)$ for all $v\in Z$. A \emph{double switching graph} is a signed graph where each vertex has exactly one antitwin. A \emph{reduction} of a double switching graph is the process of deleting exactly one vertex from each pair of antitwins. Switching a vertex of the reduced graph is the same as replacing it with its antitwin (hence, it is another reduction of the same signed graph).

Let positive integers $n$ and $k$ satisfy $n\geq k$. $[\pm n]:=\{\pm 1,\pm 2,\ldots,\pm n\}$. A set $S$ is a \emph{signed $k$-subset of $[n]$} if $S\subseteq[\pm n]$, $|S|={k}$, and $S\cap (-S)=\emptyset$. The family of all $k$-subsets of $[n]$ is denoted by $\binom{[n]}{k_{\pm}}$.

The \emph{Kneser signed graph} $\KS(n,k)$ is the signed graph defined on the vertex set $\binom{[n]}{k_{\pm}}$ where $AB$ is a positive (respectively, negative) edge if $A\cap (-B)=\emptyset$ (respectively, $A\cap B=\emptyset$). The subgraph of $\KS(n,k)$ induced by alternating $k$-subsets of $[n]$ is called the \emph{Schrijver signed graphs}, denoted by $\SSG(n,k)$. Both graphs are double switching graphs, hence have a reduction by deleting one of the vertices from each pair of antitwins. The corresponding reductions are denoted by $\HKS(n,k)$ and $\HSS(n,k)$, respectively.

Kuffner et al.\cite{KNWYZZ25+} proved that the balanced chromatic number $\chi_b(\KS(n,k))=\chi_b(\SSG(n,k))=n-k+1$, and that $\HSS(n,k)$ is a vertex-critical subgraph of the $\HKS(n,k)$.

We first determine the negative girth of the Kneser signed graphs. 

\begin{lemma}\label{lem:negGirth}
    For all positive integers $n$ and $k$ such that $n\geq k$,
    $$g_-(\SSG(n,k))\geq g_-(\KS(n,k))=\begin{cases}
    1+\left\lceil\frac{k}{n-k}\right\rceil &\text{if} \quad n> k,\\
    \infty &\text{if} \quad n=k.
    \end{cases}
    $$
\end{lemma}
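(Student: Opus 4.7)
The plan is to prove $g_-(\KS(n,k))=1+\lceil k/(n-k)\rceil$ for $n>k$ (and $=\infty$ for $n=k$) via matching lower and upper bounds; the inequality $g_-(\SSG(n,k))\ge g_-(\KS(n,k))$ is immediate since $\SSG(n,k)$ is an induced subgraph of $\KS(n,k)$. Both directions rest on the elementary sign-propagation rule: along an edge $A_{j-1}A_j$ of sign $\sigma_j$ in $\KS(n,k)$, the signed characteristic values $x_{j-1},x_j\colon\supp(\cdot)\to\{\pm1\}$ satisfy $x_j(i)=\sigma_j\,x_{j-1}(i)$ at every $i\in\supp(A_{j-1})\cap\supp(A_j)$, because positive edges force agreement and negative edges force disagreement on the common support.

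For the lower bound I would take any negative cycle $A_0A_1\cdots A_{\ell-1}A_0$ (with edge signs $\sigma_1,\ldots,\sigma_\ell$ and $\prod_j\sigma_j=-1$) and establish the structural claim that no element $i\in[n]$ can lie in every $\supp(A_j)$: if it did, every edge would be active at $i$, and iterating the propagation rule around the cycle would give $x_0(i)=\bigl(\prod_j\sigma_j\bigr)x_0(i)=-x_0(i)$, contradicting $|x_0(i)|=1$. Hence each $i$ lies in at most $\ell-1$ of the supports, and the double count $\ell k=\sum_j|\supp(A_j)|=\sum_i|\{j:i\in\supp(A_j)\}|\le n(\ell-1)$ yields $\ell\ge n/(n-k)$, i.e.\ $\ell\ge 1+\lceil k/(n-k)\rceil$. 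The $n=k$ case is handled by the same claim, since then every $\supp(A_j)=[n]$ contains every element and no negative cycle can exist, giving $g_-=\infty$.

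For the upper bound with $n>k$ I would exhibit an explicit negative cycle of length $\ell=1+\lceil k/(n-k)\rceil$ via cyclic sliding windows. With $r=n-k$, set $S_j=\{jr+1,jr+2,\ldots,jr+k\}\pmod n$ for $j=0,\ldots,\ell-1$, and assign signs by $x_j(i)=+1$ if $i\in\{jr+1,\ldots,n\}$ (not yet wrapped) and $x_j(i)=-1$ if $i\in\{1,\ldots,jr+k-n\}$ (wrapped around). A direct verification shows that at each intermediate edge $A_{j-1}A_j$ the common-support elements share the same wrap status so the signs agree, giving a positive edge; at the closing edge $A_{\ell-1}A_0$ the intersection $S_{\ell-1}\cap S_0$ consists exactly of the elements that wrapped in $S_{\ell-1}$ but not in $S_0$, so the signs disagree uniformly and the edge is negative. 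The resulting cycle has overall sign $-$ and length $\ell$. The main obstacle I foresee is bookkeeping the wrap indices cleanly when $r\nmid n$ and ensuring the closing edge correctly picks up the one extra wrap; the degenerate case $\ell=2$ (which occurs when $n\ge 2k$) should also be acknowledged, where the cycle is a digon formed by a positive and a negative parallel edge between two vertices with disjoint supports.
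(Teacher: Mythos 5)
Your proposal is correct, and the lower-bound half takes a genuinely different route from the paper. The paper first uses the double-switching structure of $\KS(n,k)$ to switch an arbitrary negative cycle into an \emph{inconsistent} cycle (exactly one negative edge), and then telescopes the bound $|A_{i+1}\setminus A_i|\leq n-k$ along the positive edges against $|A_\ell\setminus A_1|=k$ forced by the single negative edge. You instead work with an arbitrary negative cycle directly: your sign-propagation rule shows that no ground element can lie in all $\ell$ supports (the product of signs around the cycle would force $x_0(i)=-x_0(i)$), and a global double count $\ell k\leq n(\ell-1)$ gives $\ell\geq n/(n-k)=1+k/(n-k)$, hence $\ell\geq 1+\lceil k/(n-k)\rceil$ — the same bound. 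Your argument avoids the switching step entirely and handles the cases $n=k$ (every element lies in every support, so no negative cycle exists) and $n\geq 2k$ uniformly, where the paper disposes of them separately; the paper's telescoping is more local and makes the extremal configuration (consecutive windows shifted by $n-k$) visible, which is then exactly the construction both of you use for the upper bound. Your sliding-window construction with the wrap-around elements carrying negative signs is the same as the paper's (shifting along the cyclic order $1,\ldots,n,-1,\ldots,-n$), and like the paper you leave the final verification as routine; the checks you flag (cross-intersections of wrapped and unwrapped parts are empty because $r+k=n$, and the closing intersection $\{1,\ldots,(\ell-1)r+k-n\}$ is nonempty with uniformly opposite signs) do all go through.
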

\begin{proof}
    The first inequality is clear, as Schrijver signed graphs are induced subgraphs of Kneser signed graphs. So, it is enough to determine the negative girth of Kneser signed graphs. 
    
    The Kneser signed graph $\KS(k,k)$ is balanced, so the negative girth is $\infty$. If $n\geq 2k$, then $g_-(\KS(n,k))=2$, since there exist unbalanced digons. So, it suffices to deal with the case where $k< n< 2k$.
    
    Since $\KS(n,k)$ is a double switching graph, every negative cycle can be switched into an \emph{inconsistent cycle}, i.e., a negative cycle with exactly one negative edge. So we think of the length of a shortest inconsistent cycle.

    Let $A_1A_2\ldots A_\ell A_1$ be a shortest inconsistent cycle in $\KS(n,k)$, where $A_iA_{i+1}$ is positive for $i\in [\ell-1]$, and $A_1A_\ell$ is negative.

    For each $i\in[\ell-1]$, since $A_iA_{i+1}$ is positive, $A_{i}$ is disjoint from $-A_{i+1}$, which implies that $|A_{i+1}\setminus A_i|\leq n-k.$ Hence, we have
    $$|A_\ell\setminus A_1|\leq |A_2\setminus A_1|+|A_3\setminus A_2|+\cdots+|A_\ell\setminus A_{\ell-1}|\leq (\ell-1)(n-k).$$

    On the other hand, $A_\ell$ and $A_1$ are disjoint, since $A_1A_\ell$ is a negative edge. So, $$k=|A_\ell|=|A_\ell\setminus A_1|\leq(\ell-1)(n-k),$$
    which gives 
    $$\ell\geq 1+ \left\lceil\frac{k}{n-k}\right\rceil.$$

    Finally, we construct a negative cycle of length $1+\lceil\frac{k}{n-k}\rceil$ in $\KS(n,k)$ as follows. First, the ground set $[\pm n]$ as ${1,2,\ldots,n, -1,-2,\ldots,-n}$. Start with $A_1=[k]$. When $A_i$ is defined, let $A_{i+1}$ be obtained by shifting all elements of $A_i$ to the right by $n-k$, until we get $A_\ell$, where $\ell=1+\lceil\frac{k}{n-k}\rceil$. It is easy to check that this results in an inconsistent cycle of the desired length, and the proof is complete.
\end{proof}
We remark that the inequality in \Cref{lem:negGirth} can be strict. For example, $g_-(\SSG(6,4))=4$ but $g_-(\KS(6,4))=3$.

\paragraph{Proof of a lower bound of \Cref{thm:signedErdosGallai}.}Now we construct an $n$-vertex balanced-$p$-chromatic signed graph with negative girth at least $(1/e)n^{1/(p-1)}$.

Choose the integer $k$ to be the largest such that $\binom{p+k-1}{k}\leq n$. The signed graph $\widehat{G}$ is defined as $\HSS(p+k-1,k)$ together with $n-\binom{p+k-1}{k}$ isolated vertices. We have $\chi_b(\widehat{G})=\chi_b(\HSS(p+k-1,k))=p$, and the negative girth of $\widehat{G}$ is at least $1+\lceil\frac{k}{p-1}\rceil$. We are left to show that $1+\lceil\frac{k}{p-1}\rceil\geq (1/e)n^{1/(p-1)}$.

By maximality of $k$,
$$n<\binom{p+k}{k+1}=\binom{p+k}{p-1}<\left(\frac{e(p+k)}{p-1}\right)^{p-1}\leq \left(e\left(1+\left\lceil\frac{k}{p-1}\right\rceil\right)\right)^{p-1}.$$
This implies that
$$\frac{n^{\frac{1}{p-1}}}{e}\leq 1+\left\lceil\frac{k}{p-1}\right\rceil,$$
and the proof is complete.

\section{The upper bound}
For the upper bound, we follow the approach of Kierstead et al.\cite{KST84}, and prove the signed version of their main theorem. Note that the upper bound of \Cref{thm:signedErdosGallai} follows from \Cref{thm:signedKST} as a special case with $p=1$.

\begin{theorem}\label{thm:signedKST}
    For each pair of positive integers $p$ and $q$, if $\widehat{G}$ is a signed graph on $n$ vertices that contains no subgraph $\widehat{H}$ whose balanced chromatic number is greater than $p$ and whose radius in $\widehat{G}$ is at most $qn^{1/q}$, then the balanced chromatic number of $\widehat{G}$ is at most $pq$.
\end{theorem}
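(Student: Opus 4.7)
The plan is to prove Theorem~\ref{thm:signedKST} by induction on $q$, mirroring the Kierstead--Szemer\'edi--Trotter argument in the unsigned setting. The base case $q=1$ is immediate: $\widehat{G}$ is a subgraph of itself of radius at most $n-1\le n=1\cdot n^{1/1}$ from any vertex, so the hypothesis forces $\chi_b(\widehat{G})\le p$.

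For the inductive step with $q\ge 2$, assume the theorem for $q-1$. I would try to split $V(\widehat{G})$ into an inner ball $B$ and an outer complement $V\setminus B$ whose sizes are tuned to the induction hypothesis. Specifically, I look for $v\in V(\widehat{G})$ and an integer $t^*\le qn^{1/q}$ such that the BFS ball $B=\{u\in V(\widehat{G}):d_{\widehat{G}}(v,u)\le t^*\}$ satisfies
\[
|V\setminus B|\le \Bigl(\tfrac{q}{q-1}\Bigr)^{q-1} n^{(q-1)/q};
\]
the constant is tuned so that $(q-1)|V\setminus B|^{1/(q-1)}\le qn^{1/q}$. Given such a pair $(v,t^*)$, I color $\widehat{G}[B]$ with $p$ colors from palette $\{1,\dots,p\}$: its radius in $\widehat{G}$ is at most $t^*\le qn^{1/q}$ with center $v$, so the hypothesis forces $\chi_b(\widehat{G}[B])\le p$. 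Then I color $\widehat{G}[V\setminus B]$ with $p(q-1)$ further colors from the disjoint palette $\{p+1,\dots,pq\}$ via the induction hypothesis for $q-1$; this applies because distances in $\widehat{G}[V\setminus B]$ are at least those in $\widehat{G}$, hence any subgraph whose radius in $\widehat{G}[V\setminus B]$ is at most $(q-1)|V\setminus B|^{1/(q-1)}$ has radius at most $qn^{1/q}$ in $\widehat{G}$ and thus balanced chromatic number at most $p$. The two palettes are disjoint, so each color class lies entirely inside $B$ or entirely inside $V\setminus B$ and is thereby balanced in $\widehat{G}$, giving the required balanced $pq$-coloring.

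The main obstacle is producing the pair $(v,t^*)$. If some vertex has eccentricity at most $qn^{1/q}$ in $\widehat{G}$, setting $t^*=\mathrm{ecc}(v)$ makes the complement empty and finishes the argument. In general the eccentricity can be larger, and the pair should come from a pigeonhole argument on BFS layer sizes: from $\sum_{r=1}^{\lfloor qn^{1/q}\rfloor}|L_r|\le n$ one obtains a thin layer at some $r^*\le qn^{1/q}$. A thin cut does not by itself yield a small complement, so when a single ball fails, the plan is to iterate the peeling -- remove the chosen ball with its palette of $p$ colors and repeat the argument in the remaining graph, which automatically inherits the hypothesis -- while recycling palettes among BFS-distant balls. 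The key structural fact that permits palette recycling is that two balanced classes supported on vertex-disjoint subgraphs with no $\widehat{G}$-edge between them automatically combine into a balanced set. Controlling this iteration so that the total palette count stays at $pq$, particularly in the ``spread-out'' case where $\widehat{G}$ has large diameter but every small ball has small balanced chromatic number, is the most delicate part of the proof.
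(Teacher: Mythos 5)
There is a genuine gap, and it sits exactly where you place your ``most delicate part''. The single-ball step --- finding $v$ and $t^*\le qn^{1/q}$ with $|V\setminus B|\le\bigl(\tfrac{q}{q-1}\bigr)^{q-1}n^{(q-1)/q}$ --- is not merely hard to establish; it is generically false. Whenever $\widehat{G}$ has diameter much larger than $qn^{1/q}$ (a long path-like graph, for instance), every ball of radius at most $qn^{1/q}$ misses all but a vanishing fraction of the vertices, so $|V\setminus B|$ is close to $n$ rather than to $n^{(q-1)/q}$. The pigeonhole on layer sizes controls only the width of one annulus $L_{r^*}$, not the size of the complement, as you yourself note. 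So the whole proof collapses onto the iterated peeling, and there the palette accounting is unsupported: each peeled ball $B_i$ is found inside $\widehat{G}$ minus the earlier balls and can be adjacent to \emph{all} of them, so the quotient graph on the pieces can require as many palettes as there are pieces, i.e.\ on the order of $\mathrm{diam}(\widehat{G})/(qn^{1/q})$, with no mechanism capping this at $q$. ``Recycling palettes among BFS-distant balls'' presupposes a structure on the peeling that you have not produced. (Your base case $q=1$ and the observation that disjoint palettes on edge-separated parts combine into balanced classes are both fine, but they are not where the difficulty lies.)

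The paper's route is genuinely different and is the idea your sketch is missing. Instead of inducting on $q$, it proves a strengthened statement (\Cref{lem:signedKST}) by induction on an auxiliary level $\ell\le q$: if $\chi_b(W)>\ell p$ then $W$ contains a subgraph with at least $n^{\ell/q}$ vertices and radius at most $\ell n^{1/q}$ in $\widehat{G}$. The engine is an exchange argument on a \emph{maximum} balanced-$p$-chromatic subgraph $H\subseteq W$: induction applied to $W\setminus H$ produces an obstruction $P_0$, and maximality of $H$ forces each of the first $n^{1/q}$ BFS annuli of $H$ around $P_0$ to contain at least $n^{(\ell-1)/q}$ vertices --- otherwise swapping a thin annulus for $P_0$ would yield a larger balanced-$p$-chromatic subgraph (the two remaining blocks of $H$ have no edges between them, so their $p$-colorings merge). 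The obstruction therefore grows by a factor $n^{1/q}$ per level; at $\ell=q$ it must be all of $\widehat{G}$, so $R(\widehat{G})\le qn^{1/q}$ and the hypothesis gives $\chi_b(\widehat{G})\le p$, a contradiction. This is precisely how the ``spread-out'' case is tamed: one never needs a single ball with small complement, only the guarantee that low-radius obstructions multiply in size. I would recommend abandoning the induction on $q$ and adopting this maximality/exchange argument.
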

Another way to formulate this statement is that if the $qn^{1/q}$-neighborhood of every vertex of $\widehat{G}$ is balanced $p$-colorable, then $\widehat{G}$ is balanced $pq$-colorable.

In the following discussion, we express the integer of the form $\lceil n^{1/k}\rceil^m$ by $n^{m/k}$ for simplicity. For every pair of vertices $x,y$ in a signed graph $\widehat{G}$, the \emph{distance} between $x$ and $y$ in $\widehat{G}$, denoted by $d_{\widehat{G}}(x,y)$ is the length of the shortest $xy$-path in the underlying graph $G$.

If $S\subseteq V(G)$,
let $d_{\widehat{G}}(x,S):=\min_{y\in S}d_{\widehat{G}}(x,y)$, and for every subgraph $\widehat{H}$ of $\widehat{G}$, the distance $d_{\widehat{G}}(x,\widehat{H}):=d_{\widehat{G}}(x,V(H))$.
The \emph{radius} of $\widehat{H}$ in $\widehat{G}$ is defined as 
$$R_{\widehat{G}}(\widehat{H}):=\min_{x\in V(H)}\max_{y\in V(H)}d_{\widehat{G}}(x,y).$$

When the ambient graph $\widehat{G}$ is clear from the context, it is omitted from the subscript. We will often abuse the notation of a subset of vertices of a given signed graph and the subgraph it induces. For example, for $Q\subseteq V(G)$, the induced subgraph $\widehat{G}[Q]$ is referred to as the subgraph $Q$ of $\widehat{G}$.

\begin{definition}
    An $(\alpha,\beta)$-obstruction in a signed graph $\widehat{G}$ is a subgraph $Q$ with $|Q|\geq \alpha$ and $R(Q)\leq \beta$.
\end{definition}

To prove \Cref{thm:signedKST}, it suffices to prove the following stronger statement.

\begin{lemma}\label{lem:signedKST}
    Let $\widehat{G}$ be a signed graph on $n$ vertices such that for any subgraph $\widehat{H}$ of $\widehat{G}$, if $R_{\widehat{G}}(\widehat{H})\leq qn^{1/q}$, then $\chi_b(\widehat{H})\leq p$. If $0\leq \ell\leq q$, $W\subseteq \widehat{G}$, and $\chi_b(W)>\ell p$, then $W$ contains an $(n^{\ell/q},\ell n^{1/q})$-obstruction.
\end{lemma}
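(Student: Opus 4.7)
I would proceed by induction on $\ell$. The base case $\ell=0$ is immediate: $\chi_b(W)>0$ forces $W\neq\emptyset$, and any single vertex is a $(1,0)$-obstruction.

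For the inductive step, suppose the lemma holds for $\ell-1$ and let $W\subseteq\widehat{G}$ with $\chi_b(W)>\ell p$. The plan is to identify a ``densely nested'' vertex $u^*$ whose $\ell n^{1/q}$-neighbourhood in $W$ realizes the obstruction. First, I would construct a nested chain $W=W_0\supseteq W_1\supseteq\cdots\supseteq W_\ell$ by
\[
    W_j=\bigl\{u\in W_{j-1}:\bigl|\{v\in W_{j-1}:d_{\widehat{G}}(u,v)\leq n^{1/q}\}\bigr|\geq n^{1/q}\bigr\}.
\]
Each removed layer $W_{j-1}\setminus W_j$ contains no $(n^{1/q},n^{1/q})$-obstruction, since the centre of any such obstruction would be ``dense'' and hence belong to $W_j$. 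By the $\ell=1$ case of the inductive hypothesis (in contrapositive form), $\chi_b(W_{j-1}\setminus W_j)\leq p$, whence $\chi_b(W_j)\geq\chi_b(W_{j-1})-p$. Iterating, $\chi_b(W_\ell)\geq\chi_b(W)-\ell p>0$, so some $u^*\in W_\ell$ exists.

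The candidate obstruction is then $Q=\{y\in W:d_{\widehat{G}}(u^*,y)\leq\ell n^{1/q}\}$, which has radius at most $\ell n^{1/q}\leq qn^{1/q}$. The remaining task is the size bound $|Q|\geq n^{\ell/q}$, which I would prove by a nested induction on $j$ in the form
\[
    \bigl|\{y\in W_{\ell-j}:d_{\widehat{G}}(u^*,y)\leq jn^{1/q}\}\bigr|\geq n^{j/q}.
\]
The base case $j=0$ is trivial. The inductive step exploits that each vertex $v$ of the smaller level-set lies in $W_{\ell-j}$, hence by construction has at least $n^{1/q}$ $W_{\ell-j-1}$-neighbours within $d_{\widehat{G}}$-distance $n^{1/q}$, all of which lie within $(j+1)n^{1/q}$ of $u^*$ by the triangle inequality; specialising to $j=\ell$ would supply $n^{\ell/q}$ vertices of $W$ inside $Q$, completing the induction.

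The main obstacle is precisely this inductive size estimate: the $n^{1/q}$ neighbourhoods attached to different ``parent'' vertices may overlap heavily, so a naive union bound need not produce the expected factor of $n^{1/q}$ at each step. Resolving this will likely call for either an extremal choice of $u^*$ within $W_\ell$ so that its branching tree spreads out as much as possible, or a refinement of the density threshold in the definition of $W_j$ together with a matching adjustment to the $\chi_b$-accounting. This combinatorial bookkeeping is what I expect to require the most care in the actual proof.
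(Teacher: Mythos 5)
Your overall induction scheme is reasonable, but the proof has a genuine gap at exactly the point you flag, and the fix is not bookkeeping: it requires a different idea. In your nested size estimate, each vertex $v$ of the level set at stage $j$ contributes at least $n^{1/q}$ vertices of $W_{\ell-j-1}$ within distance $n^{1/q}$, but these contributions can coincide entirely across different $v$, so the union bound only yields roughly $\max(n^{j/q},n^{1/q})$ rather than $n^{(j+1)/q}$; no choice of $u^*$ and no adjustment of the density threshold obviously repairs this, because nothing in the filtration forces the newly counted vertices to be distinct from one another or from those already counted. There is also a circularity at $\ell=1$: you invoke ``the $\ell=1$ case of the inductive hypothesis'' to conclude $\chi_b(W_{j-1}\setminus W_j)\leq p$, but when proving the case $\ell=1$ the inductive hypothesis only covers $\ell=0$, so the first nontrivial step of your induction is unproved.

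The paper avoids both problems by replacing the density filtration with an exchange argument. Let $H\subseteq W$ be a balanced $p$-chromatic subgraph with the maximum number of vertices; then $\chi_b(W\setminus H)>(\ell-1)p$, so by induction $W\setminus H$ contains an $(n^{(\ell-1)/q},(\ell-1)n^{1/q})$-obstruction $P_0$, disjoint from $H$. Layer $H$ by distance to $P_0$, setting $P_j=\{x\in H: d(x,P_0)=j\}$. If some layer $P_j$ with $j<n^{1/q}$ had fewer than $n^{(\ell-1)/q}$ vertices, then $(H\setminus P_j)\cup P_0$ would be a larger balanced $p$-chromatic subgraph: the inner part $P_0\cup\bigcup_{i\leq j-1}P_i$ has radius at most $qn^{1/q}$ and hence $\chi_b\leq p$ by the hypothesis on $\widehat{G}$, the outer part $\bigcup_{i\geq j+1}P_i$ is contained in $H$, and removing $P_j$ leaves no edges between the two parts. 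This contradicts the maximality of $H$, so every layer has at least $n^{(\ell-1)/q}$ vertices; since the layers are pairwise disjoint, summing over $n^{1/q}$ of them gives the required $n^{\ell/q}$ vertices within radius $\ell n^{1/q}$. The disjointness of these layers is precisely what your neighbourhood-union argument lacks.
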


We first derive \Cref{thm:signedKST} from this lemma.
\begin{proof}[Proof of \Cref{thm:signedKST}]
    Suppose $\chi_b(\widehat{G})>pq$. Take $\ell=q$ and $W=\widehat{G}$ in \Cref{lem:signedKST}. Then, there is an $(n,qn^{1/q})$-obstruction in $\widehat{G}$, which must be $\widehat{G}$ itself. But this means $R(\widehat{G})\leq qn^{1/q}$, which, by the assumption, implies that $\chi_b(\widehat{G})\leq p$. This is a contradiction.
\end{proof}
\begin{proof}[Proof of \Cref{lem:signedKST}]
    We apply the induction on $\ell$. For the case $\ell=0$, since $W\neq \emptyset$, we can find an arbitrary vertex $v\in W$ which is a $(1,0)$-obstruction. (One can also treat the case $\ell =1$ as the base case.)

    Now, assume the claim is true for $\ell=m$, and consider the case $\ell=m+1\leq k$. Let $H\subseteq W$ be a balanced $p$-chromatic subgraph with the maximum number of vertices. 

    Consider the subgraph $W'=W\setminus H$. Clearly, $\chi_b(W')\geq (m+1)p-p=mp$. So by the induction hypothesis, $W'\setminus H$ contains an $(n^{m/q},mn^{1/q})$-obstruction $P_0$. Note that $H\cap P_0=\emptyset$. For each $j>0$, let $P_j=\{x\in H: d(x,P_0)=j\}$. Let $Q=\bigcup_{j\leq n^{1/q}}P_j$. Clearly, $R(Q)\leq n^{1/q}+R(P_0)\leq 2\ell n^{1/q}.$

    \textbf{Claim.} For any $j< n^{1/q}$, $|P_j|\geq n^{m/n}$. 

    Suppose, on the contrary, $|P_j|< n^{m/n}$. Then, $H'=H\setminus P_j\cup P_0$ has a cardinality greater than $H$. It is hence enough to obtain a contradiction by showing that $\chi_b(H')\leq p$. Partition $H'$ into $(H_0,H_1)$ where $H_0=\bigcup_{i\leq j-1}P_i$ and $H_1=\bigcup_{i\geq j+1}P_i$. Since $R(H_0)\leq qn^{1/q}$, $\chi_b(H_0)\leq p$. Since $H_1\subseteq H$, $\chi_b(H_1)\leq p$. Because there are no edges joining $H_0$ and $H_1$, we have $\chi_b(H')\leq p$.

    By the claim, it is clear that $|Q|\geq n^{1/q}\cdot n^{m/q}=n^{\ell/q}$, and that $Q$ is an $(n^{\ell/q}, \ell n^{1/q})$-obstruction. This completes the proof.
\end{proof}

\section{Balanced 3-chromatic signed graphs}
\Cref{thm:signedKST} with $p=1$ and $q=2$ has the consequence that every $n$-vertex balanced $3$-chromatic signed graph contains a negative cycle of length at most $4\sqrt{n}$. We now improve this by finding a negative cycle of length shorter than $2\sqrt{n-1}+1$. The approach is similar to that of \Cref{thm:signedKST}, and the parallel graph case (for $4$-chromatic graphs) can be found in Nilli \cite{N99} and Jiang \cite{J01}. Recently, Jiang's result received a minor improvement by Naserasr et al. \cite{NWZpc}.

\begin{theorem}\label{thm:balanced3XGraphs}
    Every balanced $3$-chromatic graph on $n$ vertices contains a negative cycle of length less than $2\sqrt{n-1}+1$.
\end{theorem}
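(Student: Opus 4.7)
My plan is to adapt the BFS-based argument of Nilli and Jiang (cited just before the theorem) from ordinary $4$-chromatic graphs to the signed setting. Let $\widehat{G}$ be a balanced $3$-chromatic signed graph on $n$ vertices with negative girth $g$; the target is $n > 1 + (g-1)^2/4$, equivalently $g < 2\sqrt{n-1}+1$. I would begin with the standard reduction to the case where $\widehat{G}$ is $\chi_b$-critical, which forces the minimum degree to be at least $2$ via the usual argument: given a balanced $2$-coloring of $\widehat{G}-v$, a degree-$1$ vertex $v$ can simply be added to the color class avoiding its unique neighbor.

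The first key ingredient will be a \emph{ball-balance lemma}: setting $r = \lfloor (g-1)/2 \rfloor$, for every vertex $v$ the induced signed subgraph on $B_{r-1}(v) := \{u : d_{\widehat{G}}(v, u) \le r-1\}$ is balanced. To prove it, I would fix a BFS tree $T$ rooted at $v$, switch along $T$ so every tree edge becomes positive, and observe that any non-tree edge $xy$ with $x, y \in B_{r-1}(v)$ closes with $T$ a fundamental cycle of length at most $2(r-1)+1 = 2r-1 < g$. Being shorter than the negative girth this cycle is positive, and since the tree portion is positive, $xy$ itself must be positive. Hence all edges inside $B_{r-1}(v)$ are positive after switching, so the ball is balanced.

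The second ingredient is a \emph{shortcut lemma} for a shortest negative cycle $C = u_0 u_1 \cdots u_{g-1} u_0$: one has $d_{\widehat{G}}(u_0, u_i) = \min(i, g-i)$ for every $i$. A shorter path $P$ would combine with each of the two arcs of $C$ from $u_0$ to $u_i$ to form a cycle; since the product of signs of the two arcs equals $\sigma(C) = -1$, exactly one of the two resulting cycles is negative, and its length is strictly less than $g$, contradicting minimality of $C$. Taking $v := u_0$ and performing a BFS with levels $L_0, L_1, \ldots, L_k$, the shortcut lemma places $u_i$ and $u_{g-i}$ as two distinct vertices of $L_{\min(i,g-i)}$ for $1 \le i < g/2$, and the ball-balance lemma forces $k \ge r$ (otherwise all of $V$ lies in the balanced ball $B_{r-1}(v)$, yielding $\chi_b(\widehat{G}) = 1$).

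The hardest step is to extract from this BFS structure the vertex count $n > 1 + (g-1)^2/4$, paralleling Jiang's level-counting estimate for $4$-chromatic graphs. Beyond the two cycle vertices guaranteed per level by the shortcut lemma, a spread of order $\Omega(r^2)$ across $L_0, \ldots, L_r$ is needed. The plan is to establish a level-growth estimate of the form $|L_i| \gtrsim i$ for $1 \le i \le r$ by contradiction: if some level were too narrow, one could assemble a balanced $2$-coloring of $\widehat{G}$ by taking the balanced ball $B_{i-1}(v)$ as one color class and applying the ball-balance lemma at suitably chosen centers past $L_i$ to produce a balanced remainder, contradicting $\chi_b(\widehat{G}) = 3$. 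The main technical obstacle is to make this quantitative spread argument go through with only the minimum-degree-$2$ guarantee available for $\chi_b$-critical signed graphs, rather than the minimum-degree-$3$ that underpins Jiang's level-doubling in the ordinary $4$-chromatic case; this is the step where $\chi_b$-criticality must be used most essentially.
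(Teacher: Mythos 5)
Your setup (the ball-balance lemma and the shortcut lemma for a shortest negative cycle) is sound and matches the first half of the paper's argument, but the proof stops exactly where the real work begins. The shortcut lemma only places two cycle vertices per BFS level, which accounts for roughly $g$ vertices in total --- linear, not quadratic, in $g$. Your plan for the quadratic count is a level-growth estimate $|L_i|\gtrsim i$ extracted from $\chi_b$-criticality and minimum degree, and you yourself flag that you do not see how to push this through with the degree bound available. That is a genuine gap, not a routine detail: no argument is given for why a narrow level could be repaired into a balanced $2$-coloring, and even if it could, the constant you would need is $|L_i|\geq 2i-1$ (not merely $\Omega(i)$) to reach $2\sqrt{n-1}+1$ rather than a weaker $C\sqrt{n}$.

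The paper closes this gap with a different device that your proposal does not contain. It fixes a \emph{maximum balanced set} $B$, takes $C$ to be a shortest negative cycle in $\widehat{G}-B$ (so $C$ is disjoint from $B$ --- note you instead take a shortest negative cycle of $\widehat{G}$, which need not avoid any particular balanced set), roots the BFS at $v\in C$, and for each level $i$ forms the balanced set $U_i=\bigl(\bigcup_{j<i}V_j\bigr)\cup\bigl(\bigcup_{j>i}(V_j\cap B)\bigr)$. Maximality of $B$ gives $|B\setminus U_i|\geq |U_i\setminus B|$; since $U_i$ contains the $2i-1$ cycle vertices within distance $i-1$ of $v$, none of which lie in $B$, this yields $|V_i\cap B|\geq 2i-1$, and summing over $i\leq\lfloor(\ell-2)/2\rfloor$ gives $|B|\geq\lfloor(\ell-2)/2\rfloor^2$ and then $n\geq\ell+|B|$. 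This exchange argument is the idea your proposal is missing; without it (or a worked-out substitute for your level-growth claim) the proof is incomplete.
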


\begin{proof}
    Let $\widehat{G}$ be a balanced $3$-chromatic graph on $n$ vertices. Suppose $g_-(\widehat{G})=\ell$.

    Let $B$ be a maximum balanced set in $\widehat{G}$. Since $\chi_b(\widehat{G)}=3$, $\widehat{G}-B$ contains a negative cycle. Let $C$ be a shortest negative cycle in $\widehat{G}-B$ (clearly, it is an induced cycle whose length is at least $\ell$). Let $v$ be a vertex in $C$. For each integer $i\geq 0$, define $V_i=\{u\in V(G): d(u,v)=i\}$.

    \textbf{Claim.} For $0\leq i\leq \lfloor(\ell-2)/2\rfloor$, $V_i$ is a balanced set.

    \textbf{Proof of Claim.} To see this, we switch to make all the edges between $V_{j-1}$ and $V_j$ positive for $j=1,2,\ldots, i$. The resulting $V_i$ induces no negative edges; otherwise, the negative girth of $\widehat{G}$ is less than $\ell$. Hence, $V_i$ is balanced.

    Now, for each $i$, $1\leq i\leq \lfloor(\ell-2)/2\rfloor$, define $H_i$ to be the subgraph of $\widehat{G}$ induced by 
    $$U_i:=\left(\bigcup_{0\leq j<i}V_j\right)\cup \left(\bigcup_{j>i}(V_j\cap B)\right).$$

    Clearly, $U_i$ is a balanced set. By the choice of $B$, we have $|B|\geq |U_i|$. In particular, $|B-U_i|\geq |U_i-B|$.

    Note that $U_i$ contains $2i-1$ vertices of $C$ that are within distance $i-1$ from $v$. These vertices do not belong to $B$. So we have $|U_i-B|\geq 2i-1$. On the other hand, $B-U_i=V_i\cap B$. It follows that $|V_i\cap B|=|B\cap U_i|\geq |U_i-B|\geq 2i-1$. Hence, we have

    $$|B|\geq \sum_{i=1}^{\lfloor(\ell-2)/2\rfloor} |V_i\cap B|\geq \sum_{i=1}^{\lfloor(\ell-2)/2\rfloor} 2i-1= \left(1+2\left\lfloor\frac{\ell-2}{2}\right\rfloor-1\right)\left\lfloor\frac{\ell-2}{2}\right\rfloor/2=\left\lfloor\frac{\ell-2}{2}\right\rfloor^2.$$

    Due to this and since $C$ is disjoint from $B$, we have

    $$n\geq \ell+|B|=\ell+\left\lfloor\frac{\ell-2}{2}\right\rfloor^2.$$

    Hence, $\ell<2\sqrt{n-1}+1$.
\end{proof}

\section{Concluding remarks}
\paragraph{5.1.} The bound in \Cref{thm:signedKST} is asymptotically tight for $p=1$, which gives the main result of this paper (\Cref{thm:signedErdosGallai}). It remains open whether it is best possible for larger $p$.

\paragraph{5.2.} Equivalent to the problem of determining $\lambda_s(n,p)$, one can ask about $n_s(\lambda,p)$, the minimum number of vertices of a signed graph with balanced chromatic number at least $p$ and negative girth at least $\lambda$. It is easily observed that $n_s(3,p)=2p-1$, and the only example is $(K_{2p-1},-)$ (the notation $(G,-)$ refers to the signed graph on the underlying graph $G$ whose edges are all negative).

The next open case is $n_s(4,3)$, and from the proof of \Cref{thm:balanced3XGraphs}, $n_s(4,3)\geq 5$. Note that for graphs, the minimum number of vertices of a $4$-chromatic graph with odd girth $4$ is $11$, and the Gr\"{o}tzsch graph is the only example that achieves this bound. We make the following conjecture for signed graphs.

\begin{conjecture}
    $n_s(4,3)=13$ and the only signed graph with $13$ vertices, balanced chromatic number $3$ and negative girth $4$ is the signed graph depicted in \Cref{fig:13}.
\end{conjecture}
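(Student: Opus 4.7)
The plan is to establish both the upper bound $n_s(4,3)\leq 13$ by verifying the construction in \Cref{fig:13}, and the matching lower bound by a refined structural analysis extending the proof of \Cref{thm:balanced3XGraphs}. The upper bound reduces to a finite check: confirm that the displayed signed graph admits no negative cycle of length $2$ or $3$ (by examining short closed walks in each switching class) and that its balanced chromatic number equals $3$ (by exhibiting a balanced $3$-coloring and ruling out every balanced $2$-partition, e.g.\ by locating three pairwise switching-incompatible balanced sets that cover the vertex set).

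For the lower bound, suppose for contradiction that $\widehat{G}$ has $n\leq 12$ vertices, negative girth $\ell\geq 4$, and $\chi_b(\widehat{G})\geq 3$. The inequality $n\geq \ell+\lfloor(\ell-2)/2\rfloor^2$ from \Cref{thm:balanced3XGraphs} already rules out $\ell\geq 6$ on $n\leq 12$ vertices, and the case $\ell=5$ can be handled by a slightly sharper version of the same BFS-layer argument. The core case is $\ell = 4$. Fix a shortest negative cycle $C = v_1 v_2 v_3 v_4$ and a maximum balanced set $B$ with $V(C)\cap B = \emptyset$. Apply the proof of \Cref{thm:balanced3XGraphs} with $v=v_j$ for each $j\in\{1,2,3,4\}$: each choice yields $|V_1^{v_j}\cap B|\geq 1$ and structural constraints on the deeper BFS layers $V_i^{v_j}$. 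The new ingredient is to combine the four centers by double counting and disjointness: a common neighbor of the opposite pair $v_1,v_3$ (or $v_2,v_4$) would create a negative triangle, which is forbidden since $\ell=4$, so the sets $V_1^{v_j}\cap B$ contribute many distinct vertices to $B$. Packaging this with the $4$ vertices of $C$ (all outside $B$) and the additional vertices forced by $\widehat{G}-B$ being balanced-$2$-chromatic should push $n$ to at least $13$.

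The hard part will be the uniqueness statement: classifying all extremal $13$-vertex examples and showing that the signed graph in \Cref{fig:13} is the only one. This is the signed analogue of Chv\'atal's uniqueness proof for the Gr\"otzsch graph, and the signed setting is subtler because balanced sets enjoy the extra flexibility of switching, so two apparent candidates may coincide up to switching-equivalence. I expect this step to require a careful classification of the local structure (degrees, common neighborhoods, short positive/negative closed walks) around $C$ in any hypothetical extremal example, possibly supplemented by a computer-assisted enumeration of signed graphs on $13$ vertices meeting the constraints, to rule out all near-extremal configurations distinct from the one in \Cref{fig:13}.
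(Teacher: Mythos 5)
This statement is posed in the paper as an open \emph{conjecture}: the paper proves nothing beyond the observation that $n_s(4,3)\geq 5$, which follows from the inequality $n\geq \ell+\lfloor(\ell-2)/2\rfloor^2$ in the proof of \Cref{thm:balanced3XGraphs} with $\ell=4$. So there is no proof in the paper to compare yours against, and your proposal should be judged on its own terms as an attempted resolution of the conjecture. As such, it is a research plan rather than a proof, and it contains both an unquantified leap and a concrete error. The leap: the only lower bound your cited machinery actually delivers for $\ell=4$ is $n\geq 5$, and the sentence ``should push $n$ to at least $13$'' is doing all the work of closing the gap from $5$ to $13$ without any argument that the four BFS centers contribute enough \emph{distinct} vertices; you would need to exhibit and count at least eight additional forced vertices, and nothing in the sketch does so. The error: a common neighbor $u$ of the opposite pair $v_1,v_3$ on the induced negative $4$-cycle $C$ does \emph{not} create a triangle at all, since $v_1v_3$ is not an edge; it creates two $4$-cycles $v_1v_2v_3u$ and $v_1v_4v_3u$ whose product is $C$, so exactly one of them is negative --- which is perfectly admissible when $\ell=4$. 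Hence the disjointness of the sets $V_1^{v_j}\cap B$ is not forced by the negative-girth hypothesis, and the double-counting step collapses.

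The uniqueness half is deferred entirely to a hoped-for classification or computer enumeration, which is a statement of intent rather than a proof; note also that enumerating signed graphs on $13$ vertices up to switching and isomorphism is a substantially larger search space than the unsigned Gr\"{o}tzsch-uniqueness argument you invoke as an analogue. The upper-bound half (verifying that the graph of \Cref{fig:13} has balanced chromatic number $3$ and negative girth $4$) is a legitimate finite check and is the one part of your plan that is clearly executable; the paper itself asserts this of its construction, describing it as a generalized Mycielskian of a negative $4$-cycle and pointing to \cite{NWZ25}. In summary: the conjecture remains open, and your proposal does not close it.
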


	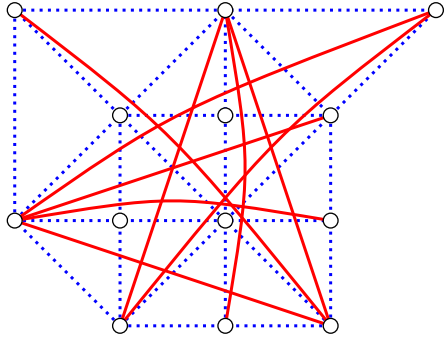
\begin{figure}[!htbp]
		\centering
		
		\begin{tikzpicture}[scale=.7]		
			\draw [line width=0.4mm, dotted, blue] (-4,0) to (2,0);
            \draw [line width=0.4mm, dotted, blue] (0,-2) to (0,4);
            \draw [line width=0.4mm, dotted, blue] (-4,0) to (-4,4);
            \draw [line width=0.4mm, dotted, blue] (2,-2) to (2,2);
            \draw [line width=0.4mm, dotted, blue] (-2,-2) to (-2,2);
            \draw [line width=0.4mm, dotted, blue] (-2,2) to (2,2);
            \draw [line width=0.4mm, dotted, blue] (-2,-2) to (2,-2);
            \draw [line width=0.4mm, dotted, blue] (2,-2) to (-4,4);
            \draw [line width=0.4mm, dotted, blue] (-2,-2) to (4,4);
            \draw [line width=0.4mm, dotted, blue] (-4,4) to (4,4);

            \draw [line width=0.4mm, dotted, blue] (-2,-2) to (-4,0) to (-2,2) to (0,4) to (2,2);

			\draw [line width=0.4mm, red] (2,2) to (-4,0) to (2,-2);
			\draw [line width=0.4mm, red] (-2,-2) to (0,4) to (2,-2);
			\draw [line width=0.4mm, red] (-4,0) .. controls (-1,2.1) .. (4,4);
            \draw [line width=0.4mm, red] (0,-2) .. controls (0.5,1) .. (0,4);
            \draw [line width=0.4mm, red] (2,-2) .. controls (-1,1.7) .. (-4,4);
            \draw [line width=0.4mm, red] (-2,-2) .. controls (1,1.7) .. (4,4);
            \draw [line width=0.4mm, red] (-4,0) .. controls (-1,0.5) .. (2,0);

			\draw [fill=white,line width=0.5pt] (0,-2) node[above] {} circle (4pt); 
			\draw [fill=white,line width=0.5pt] (0,0) node[above] {} circle (4pt); 
			\draw [fill=white,line width=0.5pt] (0,2) node[above] {} circle (4pt); 
			\draw [fill=white,line width=0.5pt] (2,0) node[above] {} circle (4pt); 
            \draw [fill=white,line width=0.5pt] (-2,0) node[above] {} circle (4pt); 
			\draw [fill=white,line width=0.5pt] (2,2) node[above] {} circle (4pt); 
			\draw [fill=white,line width=0.5pt] (2,-2) node[above] {} circle (4pt); 
			\draw [fill=white,line width=0.5pt] (-2,2) node[above] {} circle (4pt); 
            \draw [fill=white,line width=0.5pt] (-2,-2) node[above] {} circle (4pt);

			\draw [fill=white,line width=0.5pt] (-4,0) node[above] {} circle (4pt); 
			\draw [fill=white,line width=0.5pt] (-4,4) node[above] {} circle (4pt); 
			\draw [fill=white,line width=0.5pt] (0,4) node[above] {} circle (4pt); 
			\draw [fill=white,line width=0.5pt] (4,4) node[above] {} circle (4pt); 
			
		\end{tikzpicture}
		\caption{A balanced $3$-chromatic signed graph with negative girth $4$ on $13$ vertices.}
		\label{fig:13}	

    \end{figure}
 The construction in \Cref{fig:13} is a ``generalized Mycielskian of a negative $4$-cycle". For more discussion on this construction and the connection with topological methods, the reader is referred to \cite{NWZ25}.

\paragraph{5.3.} The upper bound of \Cref{thm:signedErdosGallai} does not imply \Cref{thm:EGconjecture} but only proves an upper bound of $\lambda(n,p)=O(n^{2/(p-2)})$. To see this, let $G$ be a graph on $n$ vertices with the chromatic number at least $p$. Then, the basic property of balanced coloring gives $\chi_b((G,-))\geq \lceil p/2\rceil$. By \Cref{thm:signedErdosGallai}, $(G,-)$ contains a negative cycle of length at most $O(n^{2/(p-2)})$ which corresponds to an odd cycle in $G$.

On the other hand, given a signed graph $\widehat{G}$ with $\chi_b(\widehat{G})\geq p$, the subgraph of $\widehat{G}$ induced by the negative edges (considered as a graph) has the chromatic number at least $p$. Hence, by \Cref{thm:EGconjecture}, we find an odd cycle of length at most $O(n^{1/{(p-2)}})$ in this subgraph, which is a negative cycle in $\widehat{G}$.

It is of interest to figure out whether \Cref{thm:EGconjecture} and \Cref{thm:signedErdosGallai} have direct implications.

\paragraph{Acknowledgment.} The author thanks a referee for valuable comments and for pointing out the question discussed in 5.2. 

Lujia Wang's research is supported by the Natural Science Foundation of China grant No. 12371359, and Zhejiang Normal University Postdoc Startup Funding No. ZC304022966.

\bibliographystyle{plain}

\end{document}